\newcommand{\excise}[1]{}
\renewcommand{\setminus}{\smallsetminus}
\renewcommand{\phi}{\varphi}
\newcommand{\E}{\mathbb{E}}
\renewcommand{\P}{\mathbb{P}}
\newcommand{\R}{\mathbb{R}}
\newcommand{\Z}{\mathbb{Z}}
\newcommand{\ba}{\textbf{a}}
\newtheorem{theorem}{Theorem}[section] 
\newtheorem{lemma}[theorem]{Lemma}
\newtheorem{proposition}[theorem]{Proposition}
\newtheorem{corollary}[theorem]{Corollary}
\newtheorem*{thm*}{Theorem}     
\newtheorem*{cor*}{Corollary}
\newtheorem*{prop*}{Proposition}
\newtheorem*{lem*}{Lemma}
\theoremstyle{definition}       
\newtheorem{definition}[theorem]{Definition}
\newtheorem{remark}[theorem]{Remark}
\newtheorem{example}[theorem]{Example}
\newtheorem*{rmk*}{Remark}   
\newtheorem*{question*}{Question}
\newtheorem*{claim*}{Claim}
\numberwithin{theorem}{section}
\begin{document}


\title[Limits of polyhedral multinomial distributions]{Limits of polyhedral multinomial distributions}
\author{Aniket Shah}
\email{shah@karlin.mff.cuni.cz}
\address{Department of Algebra, Faculty of Mathematics and Physics, Charles University, Prague, Czech Republic}
\thanks{The author was supported by Charles University project PRIMUS/21/SCI/014.}
\date{\today}

\begin{abstract}
We consider limits of certain measures supported on lattice points in lattice polyhedra defined as the intersection of half-spaces $\{m\in\R^n| \langle v_i, x\rangle +a_i\geq 0\}$, where $\sum_i v_i =0$. The measures are densities associated to lattice random variables obtained by restriction of multinomial random variables. We find the limiting Gaussian distributions explicitly.
\end{abstract}

\maketitle

\section{Introduction}
Let $\ba = (a_1,\ldots, a_r)\in\Z^r$ and $v_1,\ldots,v_r\in \Z^n$ define hyperplanes $H_i = \{x\in \R^n | \langle v_i, x \rangle +a_i = 0\}$, such that the corresponding intersection of half-spaces $P=\bigcap_i \{x\in\R^n | \langle v_i, x \rangle +a_i \geq 0\}$ is compact, and each $H_i$ touches $P$. Then, we define the random vector $\textbf{X}_{\ba}$ in $\R^n$ by 
\[
\P(\textbf{X}_{\textbf{a}}= x) =  \frac{1}{b}\cdot \binom{\sum_i \langle v_i, x\rangle + a_i }{\langle v_1, x\rangle + a_1, \ldots, \langle v_r, x\rangle + a_r},
\]
for each lattice point $x\in P\cap\Z^n$. We call its distribution the \textit{polyhedral multinomial distribution} associated to $\ba$. 

Imposing the further condition that $\sum_i v_i = 0$, the top term in the multinomial simplifies to $|\ba| := \sum_i a_i$, so 
\[
\P(\textbf{X}_{\ba}= x) =  \frac{1}{b}\cdot \binom{|\ba| }{\langle v_1, x\rangle + a_1, \ldots, \langle v_r, x\rangle + a_r}.
\]
In the special case that $r=n+1$, $v_i=e_i$ for $i$ from $1$ to $n$, and $v_{n+1}=-\sum_i e_i$, the corresponding distribution is the usual multinomial. More generally, $\textbf{X}_{\ba}$ follows a conditional distribution of a multinomial distribution on a higher dimensional vector space. 

We are interested in the limiting behavior of $\textbf{X}_{k\cdot\ba}$ as $k$ goes to infinity. For multinomial distributions, the limit is well-known to approach a Gaussian, due to the central limit theorem. More generally, the distribution of $\textbf{X}_{k\cdot\ba}$ does not remain within the the region (which grows sublinearly in $k$ around the mean) controlled by e.g. the central or local limit theorems \cite{durrett, ouimet} for higher dimensional multinomial distributions. Our approach is to instead approximate directly using Stirling's formula.

Our main result is that after recentering and scaling appropriately, the polyhedral multinomial distributions $\textbf{X}_{k\cdot\ba}$ converge to a specific Gaussian distribution supported on the subspace $L$ generated by differences of vectors in $P$. We let $I_{\ba}\subset\{1,\ldots,r\}$ denote the indices $i$ such that $\langle v_i,x\rangle$ is not constant on $P$. The point $m_\ba\in P$ is defined in Proposition \ref{prop.mdag}.
\begin{theorem}
When $\sum_i v_i = 0$, the sequence $\textbf{Y}_k = \frac{ \textbf{X}_{k\cdot\ba}-\E[\textbf{X}_{k\cdot\ba}]}{\sqrt{k}}$ converges weakly to $\textbf{Y}$ with density given by the Dirac measure $\delta_L$ times a Gaussian with mean $0$ and variance given by the quadratic form 
\[
x\mapsto \sum_{i\in I_{\ba}} \frac{\left(\langle v_i, x\rangle\right)^2}{\langle v_i,m_{\ba}\rangle+a_i}.
\]
\end{theorem}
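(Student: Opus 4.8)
The strategy is to apply Stirling's formula directly to the multinomial coefficient at a lattice point $x = m_{k\ba} + \sqrt{k}\,y$ near the mean, where $m_{k\ba}$ is the scaled center point (so $m_{k\ba}$ is the maximizer of the log-density, close to $k\cdot m_{\ba}$). For the multinomial $\binom{|k\ba|}{\langle v_1,x\rangle + ka_1,\ldots}$, taking logarithms and using $\log N! = N\log N - N + \tfrac12\log(2\pi N) + O(1/N)$ gives, after the linear and constant terms cancel by the choice of center, a quadratic expansion whose Hessian is exactly the promised form. The first step is therefore to write $\log\P(\textbf{X}_{k\ba} = x)$ as $-|k\ba| \sum_i p_i \log p_i$-type entropy terms plus lower-order corrections, where $p_i = (\langle v_i,x\rangle + ka_i)/|k\ba|$, and to Taylor-expand this entropy function around the point $m_{\ba}$ where the $p_i$ take their "equilibrium" values.

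The key computation is the second-order term. Writing $c_i = \langle v_i, m_\ba\rangle + a_i$ (the numbers appearing in the denominators of the quadratic form), the coordinates that vary are $\langle v_i, x\rangle + ka_i \approx k c_i + \sqrt{k}\langle v_i, y\rangle$ for $i\in I_\ba$. Plugging into the entropy expansion and using $\sum_i v_i = 0$ (which kills the first-order term and makes the top multinomial entry constant), one finds the quadratic form in $y$ has coefficient matrix $\sum_{i\in I_\ba} \frac{v_i v_i^{\!\top}}{c_i}$ — note the constancy of $\langle v_i,\cdot\rangle$ on $P$ for $i\notin I_\ba$ is what lets us drop those indices, since their contribution to the fluctuation is zero. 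This identifies the limiting density on $L$ up to the normalization constant, which is forced to be Gaussian with the stated variance; the $\delta_L$ factor appears because $\textbf{X}_{k\ba}$ is supported on a coset of $L$ (differences of $P$-vectors span $L$), so the rescaled variable lives on $L$ in the limit.

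To turn the pointwise asymptotic into weak convergence I would (i) show the error terms in Stirling are uniform for $x$ in any region $|y| \le R$, so that the rescaled densities converge uniformly on compact subsets of $L$ to the Gaussian density (this is a local limit theorem statement); (ii) control the tails, i.e.\ show the mass of $\textbf{Y}_k$ outside large balls is uniformly small — here the log-concavity of the multinomial density along lattice directions in $P$, together with the quadratic decay visible from the same Stirling expansion, gives the needed domination; and (iii) conclude that pointwise (locally uniform) convergence of densities plus tightness yields weak convergence, e.g.\ via Scheffé's lemma applied to the discrete measures pushed to $L$. A minor point to handle carefully is that $m_{k\ba}$ need not be a lattice point and $k m_\ba$ need not equal $\E[\textbf{X}_{k\ba}]$ exactly, but the discrepancy is $O(1)$, hence $O(1/\sqrt{k})$ after rescaling and harmless in the limit.

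The main obstacle I anticipate is step (ii), the tail/tightness estimate: Stirling gives a clean quadratic approximation only in the $\sqrt{k}$-window around the mean, and as the introduction emphasizes, the distribution genuinely escapes that window, so one needs a separate global bound. I would get this from the exact formula: $\log\P(\textbf{X}_{k\ba}=x)$ is a concave function of $x$ restricted to $P\cap\Z^n$ (being a sum of $-\log\Gamma$-type terms composed with affine maps, the multinomial density is log-concave), and its maximum is at $m_{k\ba}$ with a strictly negative-definite quadratic leading behavior on $L$; this forces Gaussian-type tail decay uniformly in $k$, giving the uniform integrability needed to upgrade local convergence to weak convergence. Verifying the precise uniformity of the Stirling error across this whole range, rather than just on compacta, is the technical heart of the argument.
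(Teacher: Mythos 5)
Your plan follows essentially the same route as the paper: center at the maximizer of the log-density (equivalently the minimizer of the paper's potential $\phi_{\ba}$, the point $m_{\ba}$ of Proposition \ref{prop.mdag}), apply Stirling to the multinomial weights, use $\sum_i v_i=0$ and the criticality of the center to kill the linear term and extract the Hessian $\sum_{i\in I_{\ba}}\langle v_i,x\rangle^2/(\langle v_i,m_{\ba}\rangle+a_i)$, control the mass far from the center by convexity/log-concavity (the paper bounds the ratio of multinomial coefficients by $R^k$ outside a ball of radius $k^{1/2+c}$ using convexity of $\phi_{\ba}$, which is your log-concavity step in different clothing), and then pass from the local approximation to weak convergence. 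Working on a growing window $|y|\le k^c$, $c<\tfrac16$, as the paper does, versus your fixed compacta plus a separate tail bound, is an inessential difference.

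The one step that is not justified as written is your final remark that the discrepancy between the centering point and $\E[\textbf{X}_{k\ba}]$ is $O(1)$. This is exactly the point where care is needed: the theorem recenters at the true mean, so you need $\E[\textbf{X}_{k\ba}]-k m_{\ba}=o(\sqrt{k})$, and the generic bound you can extract from log-concavity (mean and mode differ by at most a constant times the standard deviation) only gives $O(\sqrt{k})$, which would allow the limit to pick up a nonzero shift. An $O(1)$ mode--mean bound for this conditioned multinomial may well be true, but it is not a one-line fact and you give no argument for it. The fix is available inside your own framework, and is what the paper does: first prove convergence of the measures centered at $k m_{\ba}$ (the auxiliary $\nu'_k$), note that your uniform Gaussian tail bounds give the uniform integrability needed to conclude that the mean of the rescaled, mode-centered variable converges to the mean of the limit, which is $0$ by symmetry of the Gaussian on $L$; this yields $(\E[\textbf{X}_{k\ba}]-km_{\ba})/\sqrt{k}\to 0$, and the recentering is then harmless. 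With that substitution your outline matches the paper's proof.
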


%
%

\begin{remark}
This research was motivated by consideration of an analogue of Duistermaat-Heckman measure in \cite{anderson-shah} while studying line bundles on the toric arc scheme as defined in \cite{ak}. Those familiar with toric geometry may recognize that besides $\sum_i v_i=0$, the conditions on $v_i$ and $\ba$ relate to $\sum_i a_i D_i$ defining a nef divisor on an associated toric variety.
\end{remark}

\section{The potential corresponding to the data $(a_1,\ldots,a_r)$}

Let $\delta_{x}$ be the Dirac probability measure supported at $x\in\R^n$. We let $\ba = (a_1,\ldots, a_r)\in\Z^r$ and $v_1,\ldots,v_r\in \Z^n$ define $H_i = \{x\in \R^n | \langle v_i, x \rangle +a_i = 0\}$ as in the introduction. We assume the intersection $P=\bigcap_i \{x\in\R^n | \langle v_i, x \rangle +a_i \geq 0\}$ is compact, and each $H_i$ touches $P$. The distribution of the random vector $\textbf{X}_{\ba}$ is
\[
\mu_{\ba} = \frac{1}{b}\cdot\sum_{x\in P\cap\Z^n}\binom{|\ba|}{\langle v_1, x\rangle + a_1, \ldots, \langle v_r, x\rangle + a_r}\delta_x.
\]
and the distribution for $\textbf{Y}_k = \frac{ \textbf{X}_{k\cdot\ba}-\E[\textbf{X}_{k\cdot\ba}]}{\sqrt{k}}$ is
\[
\nu_k := \left(\tau_{\sqrt k}\right)_*\left( \mu_{k\cdot\ba}\ast \delta_{-\E[\textbf{X}_{k\cdot\ba}]}\right).
\]

We will relate $\nu_{k}$ to the following function, which we call the \textit{potential} of \ba. 
\begin{definition}\label{def.phi}
Let $\phi_{\ba}: P \rightarrow \R_{> 0}$ be the function
\[
x\mapsto \prod_{i=1}^r \left(\langle v_{i}, x \rangle+a_{i}\right)^{\langle v_{i}, x \rangle+a_{i}} .
\]
In this product we read $0^0$ as $1$, so $\prod_{i=1}^r \left(\langle v_{i}, x \rangle+a_{i}\right)^{\langle v_{i}, x \rangle+a_{i}} = \prod_{i\in I_\ba} \left(\langle v_{i}, x \rangle+a_{i}\right)^{\langle v_{i}, x \rangle+a_{i}} .$
\end{definition}

We let $\textrm{rel.int.}(P)$ be the interior of $P$ viewed as a subspace of the affine linear span of vectors in $P$. For $x\in \textrm{rel.int.}(P)$ and each $i\in I_\ba$, $\langle v_i, x\rangle + a_i > 0$.

\begin{proposition}\label{prop.mdag}
The function $\phi_{\ba}$ is convex on $P$, and there is a unique $m_{\ba}\in\textrm{rel.int.}(P)$ minimizing $\phi_{\ba}$.
\end{proposition}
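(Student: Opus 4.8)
The plan is to pass to the logarithm and exploit convexity of $t\mapsto t\log t$. Write $\ell_i(x)=\langle v_i,x\rangle+a_i$, which is affine and nonnegative on $P$, and let $f(t)=t\log t$ with $f(0)=0$. Since each $H_i$ meets $P$ and, for $i\notin I_{\ba}$, the function $\ell_i$ is constant on $P$, that constant value must be $0$; hence $\phi_{\ba}(x)=\prod_{i\in I_{\ba}}\ell_i(x)^{\ell_i(x)}$ is strictly positive on $P$ (each factor is either a positive number or $0^0=1$), and $\log\phi_{\ba}(x)=\sum_{i\in I_{\ba}}f(\ell_i(x))$.

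First I would record the elementary facts that $f$ is continuous and convex on $[0,\infty)$ and is smooth and strictly convex, with $f''(t)=1/t$, on $(0,\infty)$. It follows that $\log\phi_{\ba}$, being a finite sum of convex functions precomposed with affine maps, is convex on $P$; and then $\phi_{\ba}=\exp(\log\phi_{\ba})$ is convex because $\exp$ is convex and nondecreasing. For strict convexity on $\textrm{rel.int.}(P)$, note that there all $\ell_i(x)>0$ for $i\in I_{\ba}$, so $\log\phi_{\ba}$ is twice differentiable with Hessian quadratic form $u\mapsto\sum_{i\in I_{\ba}}\langle v_i,u\rangle^{2}/\ell_i(x)$. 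This form is positive definite on the direction space $L$ of the affine hull of $P$: if it annihilates some $u\in L$ then $\langle v_i,u\rangle=0$ for all $i\in I_{\ba}$, and also for $i\notin I_{\ba}$ since those $v_i$ are orthogonal to $L$; as compactness of $P$ forces $\{u:\langle v_i,u\rangle=0\ \forall i\}=\{0\}$, we get $u=0$. Hence $\log\phi_{\ba}$, and therefore $\phi_{\ba}$, is strictly convex on $\textrm{rel.int.}(P)$.

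Existence of a minimizer is immediate from continuity of $\phi_{\ba}$ on the compact set $P$. To place it in $\textrm{rel.int.}(P)$ I would rule out the relative boundary: if $x$ lies on the relative boundary then $\ell_{i_0}(x)=0$ for some $i_0\in I_{\ba}$, and moving toward any $y\in\textrm{rel.int.}(P)$ along $x_t=(1-t)x+ty$ makes $t\mapsto\log\phi_{\ba}(x_t)$ have derivative tending to $-\infty$ as $t\to0^{+}$, because the single term $f(\ell_{i_0}(x_t))=t\,\ell_{i_0}(y)\log\!\big(t\,\ell_{i_0}(y)\big)$ contributes a $\log t$; hence $\phi_{\ba}(x_t)<\phi_{\ba}(x)$ for small $t>0$ and $x$ is not a minimizer. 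Finally, uniqueness follows from strict convexity on $\textrm{rel.int.}(P)$: if $m_{\ba}\in\textrm{rel.int.}(P)$ and $x_1\in P$ were two minimizers, $\phi_{\ba}$ would be convex on the segment $[m_{\ba},x_1]$ with equal minimal values at the endpoints, hence constant there, contradicting strict convexity along the open subsegment, which lies in $\textrm{rel.int.}(P)$.

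The soft part is the convexity itself, which is just convexity of $t\log t$. The step requiring care is the relative interior/boundary bookkeeping: confirming that the constraints active on the relative boundary are precisely those indexed by $I_{\ba}$, that the Hessian form is positive definite on $L$ and not merely on all of $\R^n$, and that the one-sided derivative genuinely blows up at a boundary point. I expect this to be the main obstacle, though a routine one.
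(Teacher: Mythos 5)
Your proof is correct, and it takes a mildly but genuinely different route from the paper's. The paper works with $\phi_{\ba}$ itself along lines $t\mapsto\phi_{\ba}(x+tx')$: it computes first and second derivatives (using $\sum_i v_i=0$ to kill the ``$+1$'' terms), argues the second derivative is positive on the relative interior, and excludes boundary minimizers by noting the derivative along an interior segment becomes positive as the segment approaches the boundary, because the factor $\langle v_i,x'\rangle\log(\langle v_i,x+tx'\rangle+a_i)$ blows up with the right sign. You instead pass to $\log\phi_{\ba}=\sum_{i\in I_{\ba}}f(\ell_i(x))$ with $f(t)=t\log t$, which buys several things: convexity on all of $P$ (boundary included) comes for free from convexity of $f$ on $[0,\infty)$ rather than from a second-derivative computation valid only on the relative interior; you do not need the hypothesis $\sum_i v_i=0$ at all; and you make explicit a point the paper glosses over, namely why the Hessian form $u\mapsto\sum_{i\in I_{\ba}}\langle v_i,u\rangle^2/\ell_i(x)$ is strictly positive for $0\neq u\in L$ (compactness of $P$ forces no nonzero $u$ to be annihilated by all the $v_i$), which is exactly what justifies the paper's claim that its second derivative is ``strictly positive.'' Your boundary exclusion is also organized slightly differently --- a one-sided derivative blow-up at the boundary point itself, via the $t\log t$ term, rather than the sign of the derivative near the boundary along interior segments --- but the underlying mechanism (logarithmic divergence at an active constraint) is the same, and your final uniqueness step via strict convexity on the open subsegment joining a relative-interior minimizer to any other minimizer is sound.
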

\begin{proof}
Let $x$ be in the relative interior of $P$, so $\langle v_i, x\rangle + a_i > 0$ for $i\in I_\ba$, and  let $x'$ be in $L$. We calculate
\[
\frac{d \left(\phi_{\ba}(x+tx')\right) }{dt} = \phi_{\ba}(x)\cdot\sum_{i\in I_{\ba}}\langle  v_i, x'\rangle\left(\log(\langle v_i, x+tx' \rangle+a_i)+1\right).
\]

We have assumed that $0=\sum_i v_i =\sum_{i\in I_\ba}v_i + \sum_{i\not\in I_\ba} v_i$. Any $x'\in L$ can be written as a sum of differences of elements of $P$, so for $i\not\in I_\ba$, we have that $\langle v_i, x'\rangle =0$. Then we can calculate that $\sum_{i\in I_{\ba}} \langle v_i, x'\rangle = 0$ as well.
Thus,
\[
\frac{d \left(\phi_{\ba}(x+tx')\right) }{dt} 
=
 \phi_{\ba}(x)\cdot \left(\sum_{i\in I_{\ba}}\langle v_i, x' \rangle\log(\langle v_i, x+tx' \rangle+a_i)\right).
\]

If $x$ is in the relative interior of $P$, $\langle v_i, x \rangle + a_i>0$ for $i\in I_\ba$. As $x+tx'$ approaches the boundary of $P$ at some finite positive $t_0$, we have that for some $i\in I_\ba$, $\langle v_i, x + tx'\rangle + a_i$ decreases to $0$. This implies both that $\langle v_i, x' \rangle <0$, and that $\log(\langle v_i, x+tx' \rangle+a_i)$ goes to negative infinity.

Thus, $\frac{d \left(\phi_{\ba}(x+tx')\right) }{dt}$ is positive near the boundary, so $\phi_{\ba}(x+tx')$ cannot be minimized there.

On the other hand, we see that the second derivative
\[
\frac{d^2 \left(\phi_{\ba}(x+tx')\right) }{dt^2} = \phi_{\ba}(x)\cdot\left(\left(\sum_{i\in I_{\ba}}\langle  v_i, x'\rangle\log(\langle v_i, x+tx' \rangle+a_i)\right)^2 + \sum_i \frac{\langle v_i, x' \rangle^2}{\langle v_i, x+t x'\rangle + a_i}  \right),
\]
is strictly positive when $x+tx'\in rel.int.(P)$, so $\phi_{\ba}(x)$ is convex. 

Thus, there is a unique minimizer $m_{\ba}$ in the relative interior of $P$.
\end{proof}

\begin{example}
Let $v_1=1$, $v_2 = -1$ in $\R$, and let $\ba = (a_1,a_2) = (0,l)$. Then $P$ is the interval $[0,l]\subset\R$, and $\phi_{\ba}$ is
\[
\phi_{\ba}(x) = x^x(l-x)^{l-x},
\]
which is minimized at $\frac{l}{2}$.

\end{example}

We now show the following technical lemma for $\phi_{\ba}$, which we will use later. Note that for any $x\in L$, and $k$ large enough, $\phi_{k\cdot\ba}(km_\ba+\sqrt{k} x)$ is defined.
\begin{lemma}\label{lem.maintech}
For all $x\in L$,
\[
\lim_{k\rightarrow\infty}  \frac{\phi_{k\cdot \ba}(km_\ba)}{\phi_{k\cdot \ba}(km_\ba+\sqrt{k}x)} = e^{-\frac{1}{2}\sum_{i\in I_\ba}\frac{\langle v_i, x \rangle^2}{\langle m_\ba, v_i\rangle +a_i}}.
\]

We show this by showing a fortiori that if $x\in L$ and $|x|<k^{c}$ for some $0<c<\frac{1}{6}$, then
\[
\left|
\frac{\phi_{k\cdot \ba}(km_\ba)}{\phi_{k\cdot \ba}(km_\ba+\sqrt{k}x)}
\sqrt{\prod_{i\in I_{\ba}}\frac{ \langle v_i,km_{\ba} \rangle + k a_i  }{\langle v_i,km_{\ba} + \sqrt{k}x \rangle + k a_i  }}
-
e^{-\frac{1}{2}\sum_{i\in I_{\ba}} \frac{\langle v_i,x\rangle^2}{\langle v_i,m_{\ba}\rangle + a_i}}
\right|
=e^{-\frac{1}{2}\sum_{i\in I_{\ba}} \frac{\langle v_i,x\rangle^2}{\langle v_i,m_{\ba}\rangle + a_i}}\cdot O(k^{3c-\frac{1}{2}}).
\]

\end{lemma}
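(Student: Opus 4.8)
The plan is to take logarithms and expand to second order, tracking error terms carefully. Write $c_i := \langle v_i, m_\ba\rangle + a_i > 0$ for $i \in I_\ba$, and $d_i := \langle v_i, x\rangle$, so that $\langle v_i, km_\ba\rangle + ka_i = kc_i$ and $\langle v_i, km_\ba + \sqrt k x\rangle + ka_i = kc_i + \sqrt k d_i$. Recall $\sum_{i\in I_\ba} v_i = 0$, hence $\sum_{i\in I_\ba} d_i = 0$; this is the cancellation that killed the linear term in the proof of Proposition \ref{prop.mdag} and it will do the same here. By definition of $\phi$,
\[
\log\frac{\phi_{k\ba}(km_\ba)}{\phi_{k\ba}(km_\ba + \sqrt k x)} = \sum_{i\in I_\ba}\Bigl( kc_i\log(kc_i) - (kc_i + \sqrt k d_i)\log(kc_i + \sqrt k d_i)\Bigr).
\]
Factor $\log(kc_i + \sqrt k d_i) = \log(kc_i) + \log(1 + \tfrac{d_i}{\sqrt k c_i})$ and use $\sum_i d_i \log(kc_i)$-type terms cancel against the $\sqrt k d_i \log(kc_i)$ piece after also pulling out $\sum_i kc_i\log(kc_i)$; what remains is $-\sum_{i\in I_\ba}(kc_i + \sqrt k d_i)\log\bigl(1 + \tfrac{d_i}{\sqrt k c_i}\bigr)$, since $\sum_i \sqrt k d_i\log(kc_i) = \sqrt k \log k \sum_i d_i + \sqrt k \sum_i d_i\log c_i$ and the first summand vanishes (though the second does not — it will be matched by the square-root prefactor, see below).

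Next I would expand $\log(1 + u_i)$ with $u_i = d_i/(\sqrt k c_i)$. Since $|x| < k^c$ with $c < 1/6$, we have $u_i = O(k^{c - 1/2})$, so $\log(1+u_i) = u_i - \tfrac12 u_i^2 + O(u_i^3) = u_i - \tfrac12 u_i^2 + O(k^{3c - 3/2})$. Multiplying by $(kc_i + \sqrt k d_i)$ and summing, the $kc_i \cdot u_i = \sqrt k d_i$ terms sum to $\sqrt k \sum_i d_i = 0$; the $\sqrt k d_i \cdot u_i = d_i^2/c_i$ terms and the $-\tfrac12 kc_i u_i^2 = -\tfrac12 d_i^2/c_i$ terms combine to $+\tfrac12 \sum_{i\in I_\ba} d_i^2/c_i$; higher terms are $kc_i \cdot O(k^{3c-3/2}) = O(k^{3c - 1/2})$ and $\sqrt k d_i \cdot O(k^{2c-1}) = O(k^{3c - 1/2})$. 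So
\[
\log\frac{\phi_{k\ba}(km_\ba)}{\phi_{k\ba}(km_\ba + \sqrt k x)} = -\Bigl(\sqrt k\textstyle\sum_i d_i\log c_i\Bigr) - \tfrac12\sum_{i\in I_\ba}\frac{d_i^2}{c_i} + O(k^{3c - 1/2}).
\]
Wait — I must be more careful: the $\sqrt k \sum_i d_i \log c_i$ term I identified in the previous paragraph actually appears here too with a sign, and one checks the two occurrences cancel, since expanding $-(kc_i + \sqrt k d_i)[\log(kc_i) + \log(1+u_i)]$ and regrouping shows the only genuinely surviving $O(\sqrt k)$ contribution is $-\sqrt k \sum_i d_i \log(kc_i) = -\sqrt k\sum_i d_i\log c_i$ (using $\sum d_i = 0$ to drop $\log k$), plus the $+\sqrt k\sum d_i\log c_i$ that comes from $\sqrt k d_i \cdot u_i \cdot(\text{correction})$ — no. Let me instead handle this cleanly by folding in the square-root prefactor from the start.

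The prefactor is $\sqrt{\prod_{i\in I_\ba} kc_i/(kc_i + \sqrt k d_i)} = \prod_i (1 + u_i)^{-1/2}$, whose log is $-\tfrac12\sum_i\log(1+u_i) = -\tfrac12\sum_i u_i + \tfrac14\sum_i u_i^2 + O(k^{3c-3/2})$, and $-\tfrac12\sum u_i = -\tfrac1{2\sqrt k}\sum d_i/c_i$ is $O(k^{c - 1/2})$. The cleanest route: define $F(k) := \log\bigl[\tfrac{\phi_{k\ba}(km_\ba)}{\phi_{k\ba}(km_\ba+\sqrt k x)}\cdot(\text{prefactor})\bigr]$; combining both expansions above, all $O(\sqrt k)$ and $O(\log k\cdot\sqrt k)$ pieces cancel by $\sum_{i\in I_\ba} d_i = 0$, the $O(k^{c-1/2})$ pieces are absorbed into the error, and the constant-order term is exactly $-\tfrac12\sum_{i\in I_\ba} d_i^2/c_i$, with total error $O(k^{3c-1/2})$. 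Therefore $F(k) = -\tfrac12\sum_{i\in I_\ba}\frac{\langle v_i,x\rangle^2}{\langle v_i,m_\ba\rangle + a_i} + O(k^{3c-1/2})$. Exponentiating and using $|e^t - 1| \le |t|e^{|t|}$ with $t = O(k^{3c-1/2}) \to 0$ gives the claimed estimate, and the pointwise limit for fixed $x$ (where $c$ is immaterial) follows since then $\prod_i(1+u_i)^{-1/2} \to 1$. The main obstacle is purely bookkeeping: making sure every $O(\sqrt k)$ and $O(\sqrt k\log k)$ contribution is genuinely annihilated by $\sum_{i\in I_\ba}\langle v_i, x\rangle = 0$ before expanding further, and that the Taylor remainder in $\log(1+u)$ is uniform over the relevant range of $u_i$ — which it is, since $u_i \to 0$ uniformly under $|x| < k^c$, $c < 1/6$. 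One should also note $km_\ba + \sqrt k x$ may not be a lattice point, but $\phi_{k\ba}$ as defined extends to all of $P$, so the statement makes sense as written.
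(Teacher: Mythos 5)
There is a genuine gap at the precise point where you hesitated. After taking logarithms one gets, with $c_i=\langle v_i,m_\ba\rangle+a_i$, $d_i=\langle v_i,x\rangle$, $u_i=d_i/(\sqrt k\,c_i)$,
\begin{equation*}
\log\frac{\phi_{k\ba}(km_\ba)}{\phi_{k\ba}(km_\ba+\sqrt k x)}
=-\sqrt k\sum_{i\in I_\ba} d_i\log(kc_i)\;-\;\sum_{i\in I_\ba}(kc_i+\sqrt k d_i)\log(1+u_i),
\end{equation*}
and while $\sum_i d_i=0$ removes the $\sqrt k\log k$ piece, it leaves the term $-\sqrt k\sum_{i\in I_\ba} d_i\log c_i$, which is of size $\sqrt k\cdot|x|$ and is \emph{not} annihilated by $\sum_i d_i=0$ (the $c_i$ are not all equal in general). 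Your final paragraph asserts that ``all $O(\sqrt k)$ and $O(\log k\cdot\sqrt k)$ pieces cancel by $\sum_{i\in I_\ba} d_i=0$'' once the square-root prefactor is folded in, but the prefactor contributes only $-\tfrac12\sum_i\log(1+u_i)=O(k^{c-\frac12})$ to the logarithm, so it cannot cancel a term of order $\sqrt k$. As written, the argument would give a divergent (or at least unbounded) exponent for generic $x\in L$, so the claimed limit does not follow.

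The missing ingredient is the defining property of $m_\ba$ from Proposition \ref{prop.mdag}: since $m_\ba$ is the interior critical point of $\phi_\ba$, for every $x\in L$ one has $\sum_{i\in I_\ba}\langle v_i,x\rangle\log(\langle v_i,m_\ba\rangle+a_i)=0$, equivalently $\prod_{i\in I_\ba}(\langle v_i,m_\ba\rangle+a_i)^{\langle v_i,x\rangle}=1$. This is exactly what kills the $\sqrt k\sum_i d_i\log c_i$ term; it is how the paper's proof disposes of the factor $\prod_i(\langle v_i,m_\ba\rangle+a_i)^{-\sqrt k\langle v_i,x\rangle}$ in its three-factor decomposition, and it is the only place where minimality of $m_\ba$ (rather than just $x\in L$) enters. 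Once you insert this identity, the rest of your bookkeeping --- the second-order term $-\tfrac12\sum_i d_i^2/c_i$, the uniform $O(k^{3c-\frac12})$ Taylor remainder for $|x|<k^c$ with $c<\tfrac16$, the $O(k^{c-\frac12})$ contribution of the prefactor, and the exponentiation step --- agrees with the paper's estimates and the proof goes through.
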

\begin{proof}
We can rewrite the $k$-dependent expression as a product of three parts which we will deal with separately:
\begin{align}
\frac{\phi_{k\cdot \ba}(km_\ba)}{\phi_{k\cdot \ba}(km_\ba+\sqrt{k}x)}
\sqrt{\prod_{i\in I_{\ba}}\frac{ \langle v_i,km_{\ba} \rangle + k a_i  }{\langle v_i,km_{\ba} + \sqrt{k}x \rangle + k a_i  }}
= & 
\prod_{i\in I_{\ba}}\sqrt{\frac{\langle v_i,m_{\ba}\rangle + a_i}{\langle v_i,m_{\ba} + \frac{x}{\sqrt{k}} \rangle +  a_i}}\\
\cdot &
\left(\prod_{i\in I_{\ba}} \frac{\langle v_i,m_{\ba}\rangle + a_i}{\langle v_i,m_{\ba} + \frac{x}{\sqrt{k}} \rangle + a_i} \right)^{k(\langle v_i,m_{\ba} \rangle + a_i)}\\
\cdot &
\prod_{i\in I_{\ba}} \frac{1}{ \left(\langle v_i,m_{\ba} + \frac{x}{\sqrt{k}} \rangle + a_i\right)^{\sqrt{k}\langle v_i,x\rangle}}.
\end{align}

If $x\in L$ and $|x|<k^c$, then
\[
\sqrt{\frac{\langle v_i,m_{\ba}\rangle + a_i}{\langle v_i,m_{\ba} + \frac{x}{\sqrt{k}} \rangle +  a_i}}=1+O(k^{c-\frac{1}{2}}).
\]
For the second, which can be written $\prod_{i\in I_{\ba}} \left(\frac{1}{1+\frac{\langle v_i,x\rangle}{\sqrt{k}(\langle v_i,m_{\ba}\rangle+a_i)}} \right)^{k(\langle v_i,m_{\ba}\rangle+a_i)}$ we have
\[
\prod_{i\in I_{\ba}} \left(\frac{1}{1+\frac{\langle v_i,x\rangle}{\sqrt{k}(\langle v_i,m_{\ba}\rangle+a_i)}} \right)^{k(\langle v_i,m_{\ba}\rangle+a_i)}
=
\exp\left(-\sum_{i\in I_{\ba}} k(\langle v_i,m_{\ba}\rangle+a_i)\sum_{l=1}^\infty\frac{(-1)^{l-1}}{l}\left(\frac{\langle v_i,x\rangle}{\sqrt{k}(\langle v_i,m_{\ba}\rangle+a_i)}\right)^{l}\right).
\]
The argument of $\exp$ can be written
\[
-\sqrt{k}\left(\sum_{i\in I_{\ba}} \langle v_i,x\rangle\right) + \left(\sum_{i\in I_{\ba}}\frac{1}{2}\frac{\langle v_i,x\rangle^2}{\langle v_i,m_{\ba}\rangle+a_i}\right)-\frac{1}{\sqrt k}\left(\sum_{i\in I_{\ba}} \frac{\langle v_i,x\rangle^3}{(\langle v_i,m_{\ba}\rangle+a_i)^2}\sum_{l=0}^\infty\frac{1}{l+3}\left(\frac{-\langle v_i,x\rangle}{\sqrt{k}(\langle v_i,m_{\ba}\rangle+a_i)}\right)^l \right).
\]
Since $x\in L$, $\sum_{i\in I_{\ba}} \langle v_i,x\rangle=0$. If $|x|<k^c$, then the above is
\[
\left(\sum_{i\in I_\ba}\frac{1}{2}\frac{\langle v_i,x\rangle^2}{\langle v_i,m_{\ba}\rangle+a_i}\right)+O(k^{3c-\frac{1}{2}}),
\]
so  
\[
\left(\prod_{i\in I_\ba} \frac{\langle v_i,m_{\ba}\rangle + a_i}{\langle v_i,m_{\ba} + \frac{x}{\sqrt{k}} \rangle + a_i} \right)^{k(\langle v_i,m_{\ba} \rangle + a_i)}
=
e^{\frac{1}{2}\sum_{i\in I_\ba}\frac{\langle v_i,x\rangle^2}{\langle v_i,m_{\ba}\rangle+a{\rho_i}}}\cdot(1+O(k^{3c-\frac{1}{2}})).
\]

Finally, the last term in the product can be factored further:
\[
\prod_i \frac{1}{ \left(\langle v_i,m_{\ba} + \frac{x}{\sqrt{k}} \rangle + a_i\right)^{\sqrt{k}\langle v_i,x\rangle}}
=
\prod_i \left(\frac{1}{ \left(\langle v_i,m_{\ba}\rangle + a_i\right)^{\langle v_i,x\rangle}}\right)^{\sqrt k}
 \cdot
 \left(\frac{1}{1+\frac{\langle v_i,x\rangle}{\sqrt{k}(\langle v_i,m_{\ba}\rangle+a_i)}} \right)^{\sqrt{k}\langle v_i,x\rangle}.
\]
Recall that $m_{\ba}$ is defined as the unique critical point of the function $\phi_{\ba}$ from Proposition \ref{prop.mdag}, and so for any $x$, $\frac{d \left(\phi_{\ba}(m_{\ba}+tx)\right) }{dt}|_{t=0} = 0.$ Computing the derivative, we get
\[
0 = \phi_{\ba}(m_{\ba})\cdot\sum_{i\in I_{\ba}}\langle v_i, x\rangle\log(\langle v_i,m_{\ba}\rangle+a_i).
\]
Since $\phi_\ba(m_\ba)$ is positive, we have $\sum_{i\in I_{\ba}}\langle v_i, x\rangle\log(\langle v_i,m_{\ba}\rangle+a_i)=0$, and consequently 
\[
1= \prod_{i\in I_{\ba}}\left(\langle v_i,m_{\ba}\rangle+a_i\right)^{\langle v_i, x\rangle}.
\]
The other term is easy to estimate in a manner similar to the second product, i.e.
\[
 \left(\frac{1}{1+\frac{\langle v_i,x\rangle}{\sqrt{k}(\langle v_i,m_{\ba}\rangle+a_i)}} \right)^{\sqrt{k}\langle v_i,x\rangle}
= 
e^{-\frac{\langle v_i,x\rangle^2}{\langle v_i,m_{\ba}\rangle+a{\rho_i}}}\cdot(1+O(k^{3c-\frac{1}{2}})).
\]
Thus,
\begin{align}
\frac{\phi_{k\cdot \ba}(km_\ba)}{\phi_{k\cdot \ba}(km_\ba+\sqrt{k}x)}
\sqrt{\prod_{i\in I_{\ba}}\frac{ \langle v_i,km_{\ba} \rangle + k a_i  }{\langle v_i,km_{\ba} + \sqrt{k}x \rangle + k a_i  }}
= & 
1\cdot
(1+O(k^{c-\frac{1}{2}}))\\
\cdot &
e^{\frac{1}{2}\sum_{i\in I_{\ba}}\frac{\langle v_i,x\rangle^2}{\langle v_i,m_{\ba}\rangle+a{\rho_i}}}\cdot(1+O(k^{3c-\frac{1}{2}}))\\
\cdot &
e^{-\sum_{i\in I_{\ba}}\frac{\langle v_i,x\rangle^2}{\langle v_i,m_{\ba}\rangle+a{\rho_i}}}\cdot(1+O(k^{3c-\frac{1}{2}}))\\
= &
e^{-\frac{1}{2}\sum_{i\in I_{\ba}}\frac{\langle v_i,x\rangle^2}{\langle v_i,m_{\ba}\rangle+a{\rho_i}}}
\cdot
(1+O(k^{3c-\frac{1}{2}})).\\
\end{align}
We've assumed that $c<\frac{1}{6}$, so the error term goes to $0$.
\end{proof}

\section{The auxiliary distribution $\nu'_k$}

Because we are not able to directly access $\E[\textbf{X}_{k\cdot\ba}]$, it is easier to calculate the limit of an auxiliary distribution rather than the distributions of $\textbf{Y}_k$ (the $\nu_k$). Define
\[
\nu'_k : = \left(\tau_{\sqrt k}\right)_*\left( \mu_{k\cdot\ba}\ast \delta_{-km_{\ba}}\right).
\] 
From the definitions, $\nu'_k$ is a sum of Dirac measures with multinomial coefficients over lattice points in $kP$, but the next proposition shows that as $k$ goes to infinity, there is an alternative formula.

%
%
\begin{proposition}
Let $0<c<\frac{1}{6}$. There are constants $d_k$ such that
\[
\lim_{k\rightarrow\infty}\nu'_k
 = 
 \lim_{k\rightarrow\infty} \frac{1}{d_k}\sum_{x\in B_{k^c}(0)\cap \frac{M-km_\ba}{\sqrt k}} e^{-\frac{1}{2}\sum_{i\in I_\ba}\frac{\langle v_i, x\rangle^2}{\langle v_i,m_\ba\rangle+a_i}}
 \delta_{x}.
\]
\end{proposition}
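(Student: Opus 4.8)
\emph{Setting up via Stirling.} The plan is to approximate each multinomial coefficient in $\nu'_k$ by Stirling's formula, recognize the result in terms of the potential $\phi_{k\cdot\ba}$, invoke Lemma~\ref{lem.maintech} to pin the measure down on the ball $B_{k^c}(0)$, and discard the remaining lattice points by a tail estimate; the outcome should be that $\nu'_k$ and the measure on the right-hand side differ by something tending to $0$ in total variation. Concretely, write $\nu'_k=\frac{1}{b_k}\sum_{x\in kP\cap\Z^n}\binom{k|\ba|}{(\langle v_i,x\rangle+ka_i)_i}\,\delta_{(x-km_\ba)/\sqrt k}$ with $b_k$ the normalizing constant. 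On $kP$ the entry $\langle v_i,x\rangle+ka_i$ vanishes for $i\notin I_{\ba}$, so only the indices in $I_{\ba}$ contribute and they sum to $k|\ba|$. Feeding $n!=\sqrt{2\pi n}\,(n/e)^n(1+O(1/n))$ into the surviving factorials (the exponentials cancel, since the parts sum to the top) and using $\prod_i(\langle v_i,x\rangle+ka_i)^{\langle v_i,x\rangle+ka_i}=\phi_{k\cdot\ba}(x)$, I would rewrite $\nu'_k$, after dividing out the $y$-independent value of the coefficient at $x=km_\ba$, as $\frac{1}{S_k}\sum_y\tilde h_k(y)\,\delta_y$ where
\[
\tilde h_k(y)=\frac{\phi_{k\cdot\ba}(km_\ba)}{\phi_{k\cdot\ba}(km_\ba+\sqrt k y)}\sqrt{\prod_{i\in I_{\ba}}\frac{\langle v_i,km_\ba\rangle+ka_i}{\langle v_i,km_\ba+\sqrt k y\rangle+ka_i}}\cdot(1+\varepsilon_k(y)),
\]
$\varepsilon_k(y)$ is $O(1/k)$ whenever all entries have size of order $k$ and $1+\varepsilon_k(y)$ is bounded above by an absolute constant in general, and $S_k=\sum_y\tilde h_k(y)$ because $\nu'_k$ is a probability measure.

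\emph{The bulk.} For $|y|<k^c$ the point $km_\ba+\sqrt k y$ lies in the relative interior of $kP$ (the distance from $km_\ba$ to the relative boundary grows linearly in $k$, while $\sqrt k|y|<k^{1/2+c}$), so for $k$ large the support of $\nu'_k$ inside $B_{k^c}(0)$ is exactly $B_{k^c}(0)\cap\frac{M-km_\ba}{\sqrt k}$, there $\varepsilon_k(y)=O(1/k)$, and Lemma~\ref{lem.maintech} gives $\tilde h_k(y)=\exp\bigl(-\tfrac12\sum_{i\in I_{\ba}}\tfrac{\langle v_i,y\rangle^2}{\langle v_i,m_\ba\rangle+a_i}\bigr)(1+O(k^{3c-1/2}))$ uniformly. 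Taking $d_k$ to be the sum of $\exp\bigl(-\tfrac12\sum_{i\in I_{\ba}}\tfrac{\langle v_i,y\rangle^2}{\langle v_i,m_\ba\rangle+a_i}\bigr)$ over $y\in B_{k^c}(0)\cap\frac{M-km_\ba}{\sqrt k}$, this gives $\sum_{|y|<k^c}\tilde h_k(y)=(1+O(k^{3c-1/2}))d_k$; moreover $d_k$ stays bounded below by a positive constant, since that lattice contains a point within $O(1/\sqrt k)$ of the origin (for $k$ large $kP$ contains a lattice point at bounded distance from $km_\ba$, as $km_\ba$ lies in the rational affine hull of $kP$, whose inradius grows linearly).

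\emph{The tail --- the hard part.} The real work is to show $\sum_{|y|\ge k^c}\tilde h_k(y)\to0$. For such $y$ only the crude bound $\tilde h_k(y)\le C\,k^{|I_{\ba}|/2}\,\phi_{k\cdot\ba}(km_\ba)/\phi_{k\cdot\ba}(km_\ba+\sqrt k y)$ is available, so I need a strong lower bound on the potential ratio. Writing $c_i=\langle v_i,m_\ba\rangle+a_i>0$, $t_i=\sqrt k\langle v_i,y\rangle\ge-kc_i$, and $f(u)=u\log u$, one has $\log\bigl(\phi_{k\cdot\ba}(km_\ba+\sqrt k y)/\phi_{k\cdot\ba}(km_\ba)\bigr)=\sum_{i\in I_{\ba}}(f(kc_i+t_i)-f(kc_i))$; since $\sum_{i\in I_{\ba}}t_i=0$ for $y\in L$ and $\sum_{i\in I_{\ba}}\langle v_i,y\rangle\log c_i=0$ by the critical-point identity for $m_\ba$ from the proof of Lemma~\ref{lem.maintech}, the linear terms $f'(kc_i)t_i$ cancel in the sum, leaving $\sum_{i\in I_{\ba}}g_i(t_i)$ with $g_i(t)=f(kc_i+t)-f(kc_i)-f'(kc_i)t\ge0$. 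From $g_i''(t)=(kc_i+t)^{-1}$ one obtains $g_i(t)\ge\min\bigl(\tfrac{t^2}{3kc_i},\tfrac{kc_i}{12}\bigr)$ on $[-kc_i,\infty)$, and compactness of $P$ forces the functionals $\langle v_i,\cdot\rangle|_L$ ($i\in I_{\ba}$) to span $L^*$ --- a common kernel vector $w$ would make the line $m_\ba+\R w$ lie in $P$ --- so $\max_{i\in I_{\ba}}|t_i|\ge\gamma\sqrt k|y|\ge\gamma k^{1/2+c}$ for a fixed $\gamma>0$. Inserting this, $\log\bigl(\phi_{k\cdot\ba}(km_\ba+\sqrt k y)/\phi_{k\cdot\ba}(km_\ba)\bigr)\ge\kappa k^{2c}$ for a fixed $\kappa>0$ and all $|y|\ge k^c$, whence $\tilde h_k(y)\le C\,k^{|I_{\ba}|/2}e^{-\kappa k^{2c}}$; since $kP$ has only $O(k^n)$ lattice points, $\sum_{|y|\ge k^c}\tilde h_k(y)=O(k^{n+|I_{\ba}|/2})e^{-\kappa k^{2c}}\to0$.

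\emph{Assembling.} Combining the two sums gives $S_k=(1+o(1))d_k$, and with $\rho_k$ the right-hand measure for this $d_k$, the total-variation distance $\|\nu'_k-\rho_k\|$ equals $\sum_{|y|\ge k^c}\tilde h_k(y)/S_k$ plus $\sum_{|y|<k^c}\exp\bigl(-\tfrac12\sum_{i\in I_{\ba}}\tfrac{\langle v_i,y\rangle^2}{\langle v_i,m_\ba\rangle+a_i}\bigr)\bigl|\tfrac{1+O(k^{3c-1/2})}{S_k}-\tfrac1{d_k}\bigr|$, both of which tend to $0$ by the previous two paragraphs; hence $\nu'_k$ and $\rho_k$ have the same weak limit, which is the claim. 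I expect the crux to be the tail paragraph: one must extract the pure quadratic remainder $\sum_{i\in I_{\ba}}g_i(t_i)$ from the potential comparison --- precisely where the two linear cancellations characterizing $m_\ba$ are used --- and then lower-bound it uniformly up to $\partial(kP)$ and strongly enough (order $k^{2c}$, hence beating $\log k$) to overwhelm the polynomial lattice-point count.
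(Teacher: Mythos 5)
Your proposal is correct, and its overall architecture is the same as the paper's: rewrite the multinomial weights via Stirling in terms of the potential, use Lemma \ref{lem.maintech} on $B_{k^c}(0)$, kill the tail, and compare normalizing constants. Where you genuinely diverge is the tail estimate. The paper bounds each coefficient outside $B_{k^{1/2+c}}(km_\ba)$ by its ratio to the coefficient at a lattice point $m_{\mathrm{inner},k}$ near $km_\ba$, reduces via Stirling to $k^{l}\bigl(\phi_{\ba}(m_{\mathrm{inner},k}/k)/\phi_{\ba}(x/k)\bigr)^{k}$, and then uses convexity of $\phi_{\ba}$ together with a Taylor expansion at $m_\ba$ through an auxiliary point $m_{\mathrm{outer},k}$ at distance $k^{-(1/2-c)}$; you instead expand $\log\bigl(\phi_{k\cdot\ba}(km_\ba+\sqrt{k}y)/\phi_{k\cdot\ba}(km_\ba)\bigr)$ directly as a sum of Bregman remainders $g_i(t_i)$ of $u\log u$, using the two linear cancellations ($y\in L$ and criticality of $m_\ba$), and lower-bound it by $\kappa k^{2c}$ via an explicit bound on $g_i$ and the spanning of $L^*$ by the $\langle v_i,\cdot\rangle|_L$ forced by compactness of $P$. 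Your route is in fact sharper at the one delicate spot of the paper's argument: the ratio $\phi_{\ba}(m_{\mathrm{inner},k}/k)/\phi_{\ba}(m_{\mathrm{outer},k}/k)$ tends to $1$, so it cannot be bounded by a fixed $R<1$ as literally claimed; the Taylor expansion really gives $(1-\Theta(k^{2c-1}))^{k}=e^{-\Theta(k^{2c})}$, which is precisely the bound you prove directly, and either way it beats the polynomial lattice-point count. Your bookkeeping of the normalizers also differs harmlessly (you take $d_k$ to be the Gaussian sum over the bulk points and lower-bound it, while the paper defines $d_k$ from $b_k$ via the Stirling factor at $km_\ba$ and lower-bounds $b_k$). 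One gloss to repair: your displayed formula for $\tilde h_k(y)$ with the square-root factor is not literally valid at boundary points where some entry $\langle v_i,x\rangle+ka_i$ vanishes; as in the paper, the Stirling product should be restricted to entries $\geq 1$, which is all that your crude bound $\tilde h_k(y)\leq Ck^{|I_\ba|/2}\,\phi_{k\cdot\ba}(km_\ba)/\phi_{k\cdot\ba}(km_\ba+\sqrt{k}y)$ actually needs.
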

\begin{proof}
By definition,
\begin{equation}
\nu'_k
=
 \frac{1}{b_k}\sum_{x\in kP\cap \Z^n}\ \binom{k\sum_i a_i}{\langle v_{1},x\rangle + k a_{1},\ldots, \langle v_{r},x\rangle + k a_{r}}  \delta_{\frac{x-km_{\ba}}{\sqrt k}},
\end{equation}
for $b_k$ normalizing constants (explicitly, $b_k = \sum_{x\in kP\cap \Z^n}\ \binom{k\sum_i a_i}{\langle v_{1},x\rangle + k a_{1},\ldots, \langle v_{r},x\rangle + k a_{r}}$). We will start by showing that most of the terms can be ignored as as $k$ goes to infinity. 

Let 
\[
c_{L} = \sup_{x\in L}\inf_{m\in L\cap\Z^n}|x-m|,
\]
and let $m_{\textrm{inner}, k}$ be a point in $B_{c_{L}+1}(km_{\ba})\cap \left(km_\ba+L\right)\cap\Z^n$ (which certainly must be non-empty).

The mass from terms in the sum outside of $B_{k^{\frac{1}{2}+c}}(km_{\ba})$ can be bounded:
\[
\frac{1}{b_k}\sum_{x\in \left(kP\setminus B_{k^{\frac{1}{2}+c}}(km_{\ba})\right)\cap \Z^n}\binom{k\sum_i a_i}{\langle v_{1},x\rangle + k a_{1},\ldots,\langle v_{r},x \rangle + k a_i}
\leq
\sum_{x\in \left(kP\setminus B_{k^{\frac{1}{2}+c}}(km_{\ba})\right)\cap \Z^n}
\frac
{\binom{k\sum_i a_i}{\langle v_{1},x\rangle + k a_{1},\ldots,\langle v_{r},x \rangle + k a_i}}
{\binom{k\sum_i a_i}{\langle v_{1},m_{\textrm{inner},k}\rangle + k a_{1},\ldots,\langle v_{r},m_{\textrm{inner},k} \rangle + k a_i}}.
\]

Using Stirling's formula (see e.g. \cite[Section 12.33]{ww}), we can bound the summands:
\begin{align}
\frac
{\binom{k\sum_i a_i}{\langle v_{1},x\rangle + k a_{1},\ldots,\langle v_{r},x \rangle + k a_i}}
{\binom{k\sum_i a_i}{\langle v_{1},m_{\textrm{inner},k}\rangle + k a_{1},\ldots,\langle v_{r},m_{\textrm{inner},k} \rangle + k a_i}}.
&
\leq c'\cdot \frac{\displaystyle\prod_{\substack{i\\ \langle v_i, m_{\textrm{inner},k}\rangle + k a_i \geq 1 }} \left(\langle v_i, m_{\textrm{inner},k}\rangle + k a_i\right)^{\frac{1}{2}+\langle v_i, m_{\textrm{inner},k}\rangle + k a_i}}{\displaystyle\prod_{\substack{i\\ \langle v_i, x\rangle + k a_i \geq 1 }}  \left(\langle v_i, x\rangle + k a_i\right)^{\frac{1}{2}+\langle v_i, x\rangle + k a_i}},\\
&
\leq c'' k^{l}\cdot \left(\frac{\phi_{\ba}\left(\frac{m_{\textrm{inner},k}}{k}\right)}{\phi_{\ba}\left(\frac{x}{k}\right)}\right)^k,
\end{align}
where $c', c'',$ and $l$ are constants independent of $x$. 

By assumption, $|\frac{ m_{\textrm{inner},k} }{k}-m_\ba |< \frac{c_{L}+1}{k}$. If $x\in kP\smallsetminus B_{k^{\frac{1}{2}+c}}(km_\ba),$ then $|\frac{x}{k}-m_\ba |\geq\frac{1}{k^{\frac{1}{2}-c}}$. Let us define distinguished points $m_{\textrm{outer},k}$ such that $|\frac{m_{\textrm{outer},k}}{k}-m_\ba| = \frac{1}{k^{\frac{1}{2}-c}}$ and additionally among all $x$ satisfying $|\frac{x}{k}-m_\ba| = \frac{1}{k^{\frac{1}{2}-c}}$, the point $m_{\textrm{outer},k}$ minimizes $x\mapsto \phi_\ba(\frac{x}{k})$.

Then, since $\phi_\ba$ is convex, for any $x$ such that $|\frac{x}{k}-m_\ba| \geq \frac{1}{k^{\frac{1}{2}-c}}$, we have $\phi_\ba(\frac{m_{\textrm{outer},k}}{k})\leq \phi_\ba(\frac{x}{k})$. On the other hand, by writing $\phi_\ba$ as a Taylor polynomial around $m_\ba$, we have that $\frac{\phi_\ba(\frac{m_{\textrm{inner},k}}{k})}{\phi_\ba(\frac{m_{\textrm{outer},k}}{k})}<R<1$ for all $k$ sufficiently large.

Thus, 
\begin{align}
\sum_{x\in \left(kP\setminus B_{k^{\frac{1}{2}+c}}(km_{\ba})\right)\cap \Z^n}
\frac
{\binom{k\sum_i a_i}{\langle v_{1},x\rangle + k a_{1},\ldots,\langle v_{r},x \rangle + k a_i}}
{\binom{k\sum_i a_i}{\langle v_{1},m_{\textrm{inner},k}\rangle + k a_{1},\ldots,\langle v_{r},m_{\textrm{inner},k} \rangle + k a_i}} 
< &
\sum_{x\in \left(kP\setminus B_{k^{\frac{1}{2}+c}}(km_{\ba})\right)\cap \Z^n} c'' k^{l}R^k\\
\leq &
p(k) c'' k^{l} R^k,
\end{align}
where $p(k)$ is a polynomial in $k$ (e.g. the Ehrhart polynomial of $P$ which counts integer points in $kP$). This quantity goes to $0$ as $k$ goes to infinity, showing that 
\[
\lim_{k\rightarrow\infty}\nu'_k=\lim_{k\rightarrow\infty} \frac{1}{b_k}\sum_{x\in B_{k^{\frac{1}{2}+c}}(km_\ba)\cap kP\cap \Z^n}\ \binom{k\sum_i a_i}{\langle v_{1},x\rangle + k a_{1},\ldots, \langle v_{r},x\rangle + k a_{r}}  \delta_{\frac{x-km_{\ba}}{\sqrt k}}.
\]

We can rewrite the terms within the limit on the right-hand side of the above as 
\[
\frac{1}{b_k}\sum_{x\in B_{k^{c}}(0)\cap L\cap \frac{\Z^n-km_\ba}{\sqrt{k}}}\ \binom{k\sum_i a_i}{\langle v_{1},km_\ba+\sqrt{k}x\rangle + k a_{1},\ldots, \langle v_{r},km_\ba+\sqrt{k}x\rangle + k a_{r}}  \delta_{x},
\]
so we can then start comparing. Note: to reduce the line sizes below, we write $\binom{k\sum_i a_i}
{\langle v_{i},km_\ba+\sqrt{k}x\rangle + k a_{i}}$ for $\binom{k\sum_i a_i}
{\langle v_{1},km_\ba+\sqrt{k}x\rangle + k a_{1},\ldots, \langle v_{r},km_\ba+\sqrt{k}x\rangle + k a_{r}}$.

Let $d_k = b_k\frac{\phi_{k\cdot\ba}(km_\ba)\sqrt{\prod_{i\in I_\ba}\langle v_i,k m_\ba\rangle+ka_i}}{(2\pi)^{1-|I_\ba|}(k\sum_{i\in I_\ba}a_i)^{\frac{1}{2}+k\sum_{i\in I_\ba}a_i}}$. Then we can compare $v'_k$ and $\frac{1}{d_k}\sum_{x\in B_{k^c}(0)\cap L \frac{\Z^n-km_\ba}{\sqrt k}} e^{-\frac{1}{2}\sum_{i\in I_\ba}\frac{\langle v_i, x\rangle^2}{\langle v_i,m_\ba\rangle+a_i}}
 \delta_{x}$ via

\begin{align*}
& \sum_{x\in B_{k^c}(0)\cap L \cap \frac{\Z^n-km_\ba}{\sqrt k}}\left|\frac{1}{b_k}\binom{k\sum_i a_i}
{\langle v_{i},km_\ba+\sqrt{k}x\rangle + k a_{i}} 
%
-
\frac{1}{d_k} e^{-\frac{1}{2}\sum_{i\in I_\ba}\frac{\langle v_i, x\rangle^2}{\langle v_i,m_\ba\rangle+a_i}}\right| \\
\leq & \sum_{x\in B_{k^c}(0)\cap L\cap \frac{\Z^n-km_\ba}{\sqrt k}}
\frac{1}{b_k} \left|\binom{k\sum_i a_i}
{\langle v_{i},km_\ba+\sqrt{k}x\rangle + k a_{i}} 
- 
\frac{(2\pi)^{1-|I_\ba|}  (k\sum_{i\in I_\ba}a_i)^{\frac{1}{2}+k\sum_{i\in I_\ba}a_i}}{\phi_{k\cdot\ba}(km_\ba+\sqrt{k}x)\sqrt{\prod_{i\in I_\ba}\langle v_i,k m_\ba+\sqrt{k}x\rangle+ka_i} }\right| \\
+ & 
\sum_{x\in B_{k^c}(0)\cap L\cap \frac{\Z^n-km_\ba}{\sqrt k}}
\left|
\frac{1}{b_k}\frac{(2\pi)^{1-|I_\ba|}  (k\sum_{i\in I_\ba}a_i)^{\frac{1}{2}+k\sum_{i\in I_\ba}a_i}}{\phi_{k\cdot\ba}(km_\ba+\sqrt{k}x)\sqrt{\prod_{i\in I_\ba}\langle v_i,k m_\ba+\sqrt{k}x\rangle+ka_i} }
-
\frac{1}{d_k} e^{-\frac{1}{2}\sum_{i\in I_\ba}\frac{\langle v_i, x\rangle^2}{\langle v_i,m_\ba\rangle+a_i}}\right|.
\end{align*}

As $k$ goes to infinity, the first sum goes to $0$ by applying standard bounds associated to Stirling's approximation. 

The terms of the second can be rewritten 
\[
\frac{(2\pi)^{1-|I_\ba|}  (k\sum_{i\in I_\ba}a_i)^{\frac{1}{2}+k\sum_{i\in I_\ba}a_i}}{b_k \phi_{k\cdot\ba}(km_\ba)\sqrt{\prod_{i\in I_\ba}\langle v_i,k m_\ba\rangle+ka_i}}
\left|
\frac{\phi_{k\cdot\ba}(km_\ba)\sqrt{\prod_{i\in I_\ba}\langle v_i,k m_\ba\rangle+ka_i}}{\phi_{k\cdot\ba}(km_\ba+\sqrt{k}x)\sqrt{\prod_{i\in I_\ba}\langle v_i,k m_\ba+\sqrt{k}x\rangle+ka_i}}
-
 e^{-\frac{1}{2}\sum_{i\in I_\ba}\frac{\langle v_i, x\rangle^2}{\langle v_i,m_\ba\rangle+a_i}}\right|.
\]

By Lemma \ref{lem.maintech}, the whole sum is therefore bounded by some constant times
\[
k^{3c-\frac{1}{2}}\frac{1}{b_k}\sum_{x\in B_{k^c}(0)\cap L\cap \frac{\Z^n-km_\ba}{\sqrt k}}
\frac{(2\pi)^{1-|I_\ba|}  (k\sum_{i\in I_\ba}a_i)^{\frac{1}{2}+k\sum_{i\in I_\ba}a_i}}{\phi_{k\cdot\ba}(km_\ba)\sqrt{\prod_{i\in I_\ba}\langle v_i,k m_\ba\rangle+ka_i}}e^{-\frac{1}{2}\sum_{i\in I_{\ba}} \frac{\langle v_i,x\rangle^2}{\langle v_i,m_{\ba}\rangle + a_i}}.
\]

Recall that $b_k = \sum_{y\in kP\cap \Z^n}\ \binom{k\sum_i a_i}{\langle v_{1},y\rangle + k a_{1},\ldots, \langle v_{r},y\rangle + k a_{r}}$, so using Stirling's formula and Lemma \ref{lem.maintech}, we have for some small $\epsilon>0$ that
\[
b_k\geq (1-\epsilon) \sum_{y\in B_{k^{c}}(0)\cap L\cap \frac{\Z^n-km_\ba}{\sqrt{k}}}\ \frac{(2\pi)^{1-|I_\ba|}  (k\sum_{i\in I_\ba}a_i)^{\frac{1}{2}+k\sum_{i\in I_\ba}a_i}}{\phi_{k\cdot\ba}(km_\ba)\sqrt{\prod_{i\in I_\ba}\langle v_i,k m_\ba\rangle+ka_i}}e^{-\frac{1}{2}\sum_{i\in I_{\ba}} \frac{\langle v_i,x\rangle^2}{\langle v_i,m_{\ba}\rangle + a_i}},
\]
when $k$ is large enough. Thus besides the $k^{3c-\frac{1}{2}}$ factor, the expression is bounded above by a constant, and so the whole sum vanishes. 
\end{proof}

\begin{corollary}
The measures $\nu'_k$ weakly converge to a scalar multiple of the measure 
\[
e^{-\frac{1}{2}\sum_{i\in I_\ba} \frac{\langle v_i,x\rangle^2}{\langle v_i,m_\ba\rangle+a_i}}\cdot\delta_L.
\]
\end{corollary}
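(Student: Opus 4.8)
The plan is to read the corollary off the preceding proposition. That proposition already identifies $\lim_{k}\nu'_k$ with $\lim_k \rho_k$, where
\[
\rho_k \;=\; \frac{1}{d_k}\sum_{x\in \Lambda_k} e^{-\frac12 Q(x)}\,\delta_x,
\qquad
Q(x)\;:=\;\sum_{i\in I_{\ba}}\frac{\langle v_i,x\rangle^2}{\langle v_i,m_{\ba}\rangle+a_i},
\]
and $\Lambda_k$ is the (finite, nonempty) set of points of the coset lattice $\tfrac1{\sqrt k}\big((km_{\ba}+L)\cap\Z^n\big)-\tfrac{km_{\ba}}{\sqrt k}\subset L$ that lie in $B_{k^c}(0)$. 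All of the analytic work — replacing multinomial coefficients by the Gaussian weights via Stirling's formula and Lemma~\ref{lem.maintech}, and discarding the terms far from $km_{\ba}$ — is done there; what remains is to recognize $\rho_k$ as a Riemann sum for the Gaussian measure on $L$.

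First I would check that $Q$ is positive definite on $L$, so that the candidate limit has finite mass. Indeed $Q(x)\ge 0$, with equality only if $\langle v_i,x\rangle=0$ for all $i\in I_{\ba}$; since $x\in L$ forces $\langle v_i,x\rangle=0$ also for $i\notin I_{\ba}$ (as in the proof of Proposition~\ref{prop.mdag}, using $\sum_i v_i=0$), equality forces $\langle v_i,x\rangle=0$ for every $i$, whence for any $p\in P$ the whole line $p+\R x$ satisfies every defining inequality of $P$, contradicting compactness unless $x=0$. Hence $Z:=\int_L e^{-\frac12 Q(x)}\,dx<\infty$, where $dx$ is a fixed Haar measure on $L$, i.e.\ a scalar multiple of $\delta_L$, and $Z^{-1}e^{-\frac12 Q}\,dx$ is a genuine probability measure on $L\subset\R^n$.

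The main step is the Riemann-sum limit: for every bounded continuous $f$ on $\R^n$,
\[
\mathrm{covol}\!\left(\tfrac1{\sqrt k}(L\cap\Z^n)\right)\cdot\!\!\sum_{x\in\Lambda_k} f(x)\,e^{-\frac12 Q(x)}
\;\xrightarrow[\;k\to\infty\;]{}\;
\int_L f(x)\,e^{-\frac12 Q(x)}\,dx .
\]
The mesh of the lattice underlying $\Lambda_k$ tends to $0$, so this is the usual fact, with two mild wrinkles: the coset offset drifts with $k$, which is irrelevant since Riemann-sum convergence does not see it; and the summation is truncated to the growing ball $B_{k^c}(0)$. For the latter I would run a three-$\varepsilon$ argument — on a fixed ball $B_R(0)$ the partial sum converges to $\int_{B_R\cap L}$ by uniform continuity of $f e^{-\frac12 Q}$ on compacta together with mesh $\to 0$; the contribution of the annulus $R\le|x|<k^c$ is bounded, uniformly in $k$, by a constant multiple of $\int_{\{|x|\ge R\}\cap L}e^{-\frac12 Q(x)}\,dx$ (again comparing a lattice sum to an integral of the monotone-tailed Gaussian, legitimate once the mesh is small); and both tails vanish as $R\to\infty$. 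Applying the displayed relation to $f$ and to $f\equiv 1$ and taking the quotient gives $\int f\,d\rho_k\to Z^{-1}\int_L f\,e^{-\frac12 Q}\,dx$, so $\nu'_k$ converges weakly to $Z^{-1}e^{-\frac12 Q}\,dx|_L$, which is a scalar multiple of $e^{-\frac12 Q}\,\delta_L$.

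I expect the only real difficulty to be bookkeeping rather than analysis: the quotient step above tacitly uses that $d_k$ is asymptotic to $\sum_{x\in\Lambda_k}e^{-\frac12 Q(x)}$, i.e.\ that $\rho_k$ is asymptotically a probability measure. This is already implicit in the proof of the previous proposition, where $b_k$ is pinched between $(1-\varepsilon)$ and $(1+\varepsilon)$ times that sum divided by the explicit factor built into the definition of $d_k$; granting it, the Riemann-sum limit finishes the proof and even pins the limiting scalar to $1/\int_L e^{-\frac12 Q(x)}\,dx$ relative to the chosen Haar measure on $L$. Since the statement only asserts convergence up to an unspecified positive scalar, one may alternatively bypass this by invoking that the $\nu'_k$ are genuine probability measures sharing the weak limit of $\rho_k$, forcing that common limit to be a probability measure proportional to $e^{-\frac12 Q}\,\delta_L$.
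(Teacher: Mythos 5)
Your proposal is correct and follows essentially the same route as the paper: read the limit off the preceding proposition, recognize the sum as a Riemann sum for the Gaussian on $L$, and fix the scalar using that the $\nu'_k$ are probability measures. The paper's own argument is just a terser version of this (it does not spell out the positive definiteness of the quadratic form on $L$, the drifting coset, or the truncation to $B_{k^c}(0)$, all of which you handle explicitly).
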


This is true because the sequence
\[
\frac{1}{d_k}\sum_{x\in B_{k^c}(0)\cap L\cap \frac{\Z^n-km_\ba}{\sqrt k}} e^{-\frac{1}{2}\sum_{i\in I_\ba}\frac{\langle v_i, x\rangle^2}{\langle v_i,m_\ba\rangle+a_i}}
 f(x),
 \]
converges to some multiple of $\int_L e^{-\frac{1}{2}\sum_{i\in I_\ba} \frac{\langle v_i,x\rangle^2}{\langle v_i,m_\ba\rangle+a_i}}f(x)dx$ by the definition of the Riemann integral.

Since $\lim_{k\rightarrow\infty}\frac{1}{d_k}\sum_{x\in B_{k^c}(0)\cap L\cap \frac{\Z^n-km_\ba}{\sqrt k}} e^{-\frac{1}{2}\sum_{i\in I_\ba}\frac{\langle v_i, x\rangle^2}{\langle v_i,m_\ba\rangle+a_i}}\delta_x$ is equal to $\lim_{k\rightarrow\infty}\nu'_k$ which is a probability measure, we can determine finally that $\lim_{k\rightarrow\infty}\nu'_k = \frac{e^{-\frac{1}{2}\sum_{i\in I_\ba}\frac{\langle v_i, x\rangle^2}{\langle v_i,m_\ba\rangle+a_i}}\delta_L}{\int_L e^{-\frac{1}{2}\sum_{i\in I_\ba}\frac{\langle v_i, x\rangle^2}{\langle v_i,m_\ba\rangle+a_i}}dx}$.

\section{The limit of $\nu_k$}

Now we can directly address the distribution of $\textbf{Y}_k$, which is the rescaling of the polyhedral multinomial random vector $X_k$, translated to have mean $0$. The distribution has the formula
\[
\nu_k = \left(\tau_{\sqrt k}\right)_*\left( \mu_{k\cdot\ba}\ast \delta_{-\E[\textbf{X}_{k\cdot\ba}]}\right).
\]
According to the results of the previous section, 
\[
\lim_{k\rightarrow\infty}\nu'_k = \lim_{k\rightarrow\infty}\left(\tau_{\sqrt k}\right)_*\left( \mu_{k\cdot\ba}\ast \delta_{-k\cdot m_\ba}\right) = \frac{e^{-\frac{1}{2}\sum_{i\in I_\ba}\frac{\langle v_i, x\rangle^2}{\langle v_i,m_\ba\rangle+a_i}}\delta_L}{\int_L e^{-\frac{1}{2}\sum_{i\in I_\ba}\frac{\langle v_i, x\rangle^2}{\langle v_i,m_\ba\rangle+a_i}}dx}.
\]
\begin{theorem}
The limit distribution of $\textbf{Y}_k$ is the probability measure 
\[
\frac{e^{-\frac{1}{2}\sum_{i\in I_\ba}\frac{\langle v_i, x\rangle^2}{\langle v_i,m_\ba\rangle+a_i}}\delta_L}{\int_L e^{-\frac{1}{2}\sum_{i\in I_\ba}\frac{\langle v_i, x\rangle^2}{\langle v_i,m_\ba\rangle+a_i}}dx}
\]
\end{theorem}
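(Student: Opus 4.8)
\emph{Proof sketch.}
The plan is to deduce the theorem from the Corollary of the previous section, which already identifies the weak limit of $\nu'_k$ with the displayed measure; write $\mu$ for this limit and $Q(y):=\sum_{i\in I_\ba}\frac{\langle v_i,y\rangle^2}{\langle v_i,m_\ba\rangle+a_i}$ for the quadratic form in the statement. The only difference between $\nu_k$ and $\nu'_k$ is that the first recenters $\textbf{X}_{k\cdot\ba}$ by $\E[\textbf{X}_{k\cdot\ba}]$ and the second by $km_\ba$ before rescaling by $1/\sqrt k$; hence $\nu_k$ is the translate of $\nu'_k$ by the vector $w_k:=\frac{km_\ba-\E[\textbf{X}_{k\cdot\ba}]}{\sqrt k}$, and the mean of $\nu'_k$ is exactly $-w_k$. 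Since a deterministic translation by a vector tending to $0$ does not affect a weak limit (Slutsky's theorem; see e.g. \cite{durrett}), once we know $w_k\to 0$ it follows that $\nu_k$ also converges weakly to $\mu$, which is the claim. So everything reduces to showing $w_k\to 0$, i.e. that the means of $\nu'_k$ converge to the mean $0$ of the centered Gaussian $\mu$.

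I would obtain this from the weak convergence $\nu'_k\to\mu$ together with a uniform bound $\sup_k\int_{\R^n}|y|^2\,d\nu'_k(y)<\infty$: an $L^2$-bounded family is uniformly integrable in each coordinate, and weak convergence plus uniform integrability forces $\int y\,d\nu'_k(y)\to\int y\,d\mu(y)=0$. To prove the second-moment bound, fix $c$ with $0<c<\frac{1}{10}$ (permitted by the Proposition) and split $\int|y|^2\,d\nu'_k$ over $B_{k^c}(0)$ and its complement. Over the complement, the proof of the Proposition bounds the $\nu'_k$-mass by a polynomial in $k$ times $R^k$ with $R<1$, while $|y|^2$ never exceeds the squared diameter of $\sqrt k\,(P-m_\ba)$, which is $O(k)$; so that part of the integral is a polynomial in $k$ times $R^k$ and tends to $0$. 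Over $B_{k^c}(0)$ we have $|y|^2\le k^{2c}$, so the $\ell^1$-comparison carried out in the proof of the Proposition (whose error there is $O(k^{3c-1/2})$ times a bounded quantity) shows that $\int_{B_{k^c}(0)}|y|^2\,d\nu'_k(y)$ differs from $\frac{1}{d_k}\sum_y|y|^2 e^{-Q(y)/2}$ by $O(k^{5c-1/2})\to 0$; and rewriting this sum as a ratio of Riemann sums, exactly as in the Corollary (where one also sees $\frac{1}{d_k}\sum_y e^{-Q(y)/2}\to 1$), identifies its limit with the finite second moment $\frac{\int_L|y|^2 e^{-Q(y)/2}\,dy}{\int_L e^{-Q(y)/2}\,dy}$ of $\mu$. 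Hence $\sup_k\int|y|^2\,d\nu'_k(y)<\infty$, so $w_k\to 0$ and the theorem follows.

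The one delicate point is this uniform second-moment estimate. One must verify that the exponential concentration of the polyhedral multinomial coefficients near $km_\ba$ — already quantified in the proof of the Proposition — dominates both the polynomial growth of $|y|^2$ and the polynomially many lattice points in the tail, and, more subtly, that the Stirling and $\ell^1$ comparison used there still wins after being multiplied by the factor $k^{2c}$ coming from $|y|^2$ on the ball; this is precisely why it helps to take $c<\frac{1}{10}$ rather than only $c<\frac{1}{6}$. Everything else — insensitivity of weak limits to vanishing translations and to uniformly integrable perturbations — is routine.
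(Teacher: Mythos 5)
Your proposal is correct and follows essentially the same route as the paper: both reduce the theorem to showing that the recentering vector $\frac{\E[\textbf{X}_{k\cdot\ba}]-km_\ba}{\sqrt k}$ tends to $0$, using that the weak limit of $\nu'_k$ is the centered Gaussian, and then note that a translation by a vanishing vector does not change the weak limit. The only difference is that you explicitly justify passing from weak convergence of $\nu'_k$ to convergence of its means via a uniform second-moment bound (uniform integrability), a step the paper asserts without comment; this is a welcome tightening rather than a different argument.
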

\begin{proof}
Integrating $(x_1,\ldots,x_n)$ against $\lim_{k\rightarrow\infty}\nu'_k$ is $0$, so $\lim_{k\rightarrow\infty}\frac{\E[X_{k\cdot\ba}]-k\cdot m_\ba}{\sqrt{k}} =0$. Therefore 
\begin{align*}
\lim_{k\rightarrow\infty}\textbf{Y}_k  & = \lim_{k\rightarrow\infty} \frac{X_{k\cdot\ba}-\E[X_{k\cdot\ba}]}{\sqrt{k}}, \\
& = \lim_{k\rightarrow\infty} \frac{X_{k\cdot\ba}-km_\ba}{\sqrt{k}},
\end{align*}
which is the random variable with distribution $\lim_{k\rightarrow\infty} \nu'_k$.
\end{proof}


\end{document}